\newtheorem{theorem}{Theorem}[section]
\newtheorem{lemma}[theorem]{Lemma}
\newtheorem{proposition}[theorem]{Proposition}
\theoremstyle{definition}
\theoremstyle{remark}
\newtheorem{conjecture}[theorem]{Conjecture}
\numberwithin{equation}{section}
\title[Segments of bounded power series]{Sharp bounds for some segments \\ of bounded power series}
\author{Leonid V. Kovalev}
\address{215 Carnegie, Department of Mathematics, Syracuse University, Syracuse, NY 13244, USA \\
ORCiD: 0000-0001-8002-7155}
\email{lvkovale@syr.edu}
\subjclass[2020]{Primary 30B10; Secondary 30C15, 30H10, 30J10}
\keywords{Holomorphic function, Hardy space, power series, Taylor polynomial, Blaschke product, trigonometric polynomial}
\begin{document}
\baselineskip6.5mm

\begin{abstract} We obtain sharp upper bounds for three-term segments of a bounded power series. Along the way we show that the Taylor polynomials of a certain algebraic function do not vanish in the unit disk.  
\end{abstract}

\maketitle

\section{Introduction}\label{sec-introduction} 

Suppose that a holomorphic function $f(z)=\sum_{k=0}^\infty a_k z^k$ satisfies $|f(z)|\le 1$ for all $z$ in the open unit disk $\mathbb D$. A classical theorem of 
Landau~\cite{Landau} gives sharp upper bounds for the partial sums $S_n(z)= \sum_{k=0}^n a_k z^k$, see~\eqref{eq-Landau-thm} below. In this paper we consider the growth of more general segments of the power series, $S_{m, n}(z)=\sum_{k=m}^n a_k z^k$. Our main result, Theorem~\ref{thm-three-terms}, implies that 
\begin{equation}\label{eq-uniform-bound-triples}
\sup_{z\in \mathbb D} |S_{m, n}(z)| < \frac{1}{3} + \frac{2\sqrt{3}}{\pi} \approx 1.436  \quad \text{if } n-m = 2   
\end{equation}
and the constant in~\eqref{eq-uniform-bound-triples} is the best possible. More precisely, Theorem~\ref{thm-three-terms} gives the sharp upper bound for $|S_{n-2, n}|$ for each $n\ge 2$. For $|S_{n-1, n}|$ such bounds were determined by Sz\'{a}sz~\cite{Szasz1918}; they approach $4/\pi$ as $n\to\infty$.   

The Landau-Sz\'{a}sz method relates the problem of bounding $S_{m, n}$ to the zeros of Taylor polynomials of the function $\sqrt{1+z+\cdots +z^{n-m}}$. Under the condition $n-m=2$ in~\eqref{eq-uniform-bound-triples}, one requires the following result. 

\begin{theorem}\label{thm-no-zeros} The partial sums of the power series $\sqrt{1+z+z^2} = \sum_{k=0}^\infty b_k z^k$ have no zeros in $\overline{\mathbb D}$. 
\end{theorem}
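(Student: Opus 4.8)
The plan is to combine the analytic structure of the full algebraic function $g(z)=\sqrt{1+z+z^2}$ with the argument principle, reducing the whole question to the behaviour of the Taylor sections $P_N(z)=\sum_{k=0}^N b_k z^k$ near the two singular points on the circle. First I would fix the branch and locate the singularities. Writing $z=x+iy$, the imaginary part of $1+z+z^2$ equals $y(1+2x)$, which vanishes only on $y=0$, where $1+z+z^2=1+x+x^2>0$, or on $x=-\tfrac12$, where it equals $\tfrac34-y^2>0$ because $y^2<\tfrac34$ throughout $\mathbb{D}$. Hence $1+z+z^2$ omits $(-\infty,0]$ on $\mathbb{D}$, so the principal branch $g$ is holomorphic with $\operatorname{Re}g>0$; in particular $g$ is zero-free on $\mathbb{D}$, and its boundary values lie in the closed right half-plane. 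The only zeros of $1+z+z^2$ in $\overline{\mathbb{D}}$ are the primitive cube roots of unity $\zeta=e^{\pm 2\pi i/3}$, which sit on the circle and are square-root branch points of $g$.

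Next I would record the decay of the coefficients. A singularity analysis (Darboux) applied to the two conjugate square-root singularities at $\zeta,\bar\zeta$ gives $b_k=O(k^{-3/2})$, so $\sum_k|b_k|<\infty$; thus $g$ extends continuously to $\overline{\mathbb{D}}$ and $P_N\to g$ uniformly on $\overline{\mathbb{D}}$. For each fixed $N$, the number of zeros of $P_N$ in $\mathbb{D}$ equals the winding number of the closed curve $\theta\mapsto P_N(e^{i\theta})$ about the origin, so it suffices to prove that this curve never passes through $0$ and that it does not encircle $0$. The natural comparison is with $g(e^{i\theta})$, which lies in the closed right half-plane; since that curve cannot enclose the origin, any winding of $P_N$ must be generated in small neighbourhoods of $\zeta,\bar\zeta$, the only places where $g$ touches the imaginary axis.

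To make the comparison quantitative I would pass to the rotated sections $e^{-i\theta/2}P_N(e^{i\theta})=\sum_{k=0}^N b_k e^{i(k-1/2)\theta}=:F_N(\theta)+iG_N(\theta)$, whose limit satisfies $e^{-i\theta/2}g(e^{i\theta})=\sqrt{1+2\cos\theta}$ on $(-\tfrac{2\pi}{3},\tfrac{2\pi}{3})$. On this ``bulk'' arc the limit is real and strictly positive, so on compact subsets $F_N\to\sqrt{1+2\cos\theta}>0$ while $G_N\to 0$, ruling out zeros of $P_N$ there; on the complementary arcs $\tfrac{2\pi}{3}<|\theta|\le\pi$ the limit is purely imaginary and nonzero, so $G_N$ stays bounded away from $0$ on compact subsets and again $P_N\neq 0$. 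By uniform convergence these conclusions hold simultaneously for all large $N$ off any fixed neighbourhood of $\zeta,\bar\zeta$.

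The remaining, and main, difficulty is precisely the region $|\theta\mp\tfrac{2\pi}{3}|\lesssim 1/N$, where $(F_N,G_N)$ may approach $(0,0)$ and uniform convergence is useless because the transition scale shrinks like $1/N$. The hard part is a uniform-in-$N$ lower bound for $|P_N|$ in a $1/N$-neighbourhood of each branch point. I would attack this through the local model $g(z)\approx c\,(z-\zeta)^{1/2}$, using Szegő's description of the behaviour of power-series sections at a singular point on the circle of convergence (the transition being governed by a universal Fresnel/incomplete-Gamma profile), and verify that along this profile the curve $P_N(e^{i\theta})$ stays off $0$ and sweeps out no net angle around it. As an alternative bookkeeping device for the same step, the polynomial identity $P_N(z)^2=(1+z+z^2)+z^{N+1}T_N(z)$, with $\deg T_N=N-1$, converts a hypothetical zero $z_0\in\overline{\mathbb{D}}$ into the equation $1+z_0+z_0^2=-z_0^{N+1}T_N(z_0)$; a suitable bound on $T_N$ near $\zeta$ should contradict this, since the left side is $O(|z_0-\zeta|)$ while the right side cannot be made that small for the relevant $z_0$. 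Establishing that bound on $T_N$ (equivalently, the local profile estimate) is the crux; once the branch-point neighbourhoods are cleared, the boundary non-vanishing and the vanishing of the winding number follow, and the argument principle yields that $P_N$ has no zeros in $\overline{\mathbb{D}}$.
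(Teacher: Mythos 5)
Your reduction to the branch points is sound as far as it goes: on $\mathbb{D}$ the polynomial $1+z+z^2$ omits $(-\infty,0]$, the principal branch $g$ has positive real part, $\sum_k |b_k|<\infty$, and uniform convergence plus the argument principle does confine any possible zeros of the sections $P_N$ (for large $N$) to shrinking neighbourhoods of $\zeta=e^{\pm 2\pi i/3}$. But the write-up stops exactly where the theorem begins. The step you yourself label ``the crux'' --- a lower bound for $|P_N|$, uniform in $N$, on the $1/N$-scale neighbourhoods of the branch points --- is never carried out, and it is not a routine verification. By Jentzsch's theorem the zeros of the partial sums of a power series with radius of convergence $1$ accumulate at \emph{every} point of the unit circle, in particular at $\zeta$; so zeros of $P_N$ genuinely do approach the region you need to clear, and the entire content of the theorem is whether they land inside or outside $\overline{\mathbb{D}}$. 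Deciding this via the Szeg\H{o}-type scaling limit would require computing the universal profile for a square-root singularity, locating all zeros of that profile relative to the image of the unit circle under the scaling map, and an error estimate uniform in $N$ strong enough to transfer the conclusion back to $P_N$ --- none of which appears. Your alternative identity $P_N(z)^2=(1+z+z^2)+z^{N+1}T_N(z)$ has the same problem in different clothing: it needs a lower bound on $|T_N|$ near $\zeta$, and nothing in the structure of $T_N$ (its coefficients are the high-order coefficients of $P_N^2$, of oscillating sign) makes such a bound evident. A secondary gap: your compactness arguments conclude only ``for all large $N$'' with an ineffective threshold, while the theorem is asserted for every $N$.

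For comparison, the paper avoids all local analysis at the branch points by an algebraic device: it multiplies $T_n$ by $\sqrt{1-z}$ and proves that in $T_n(z)\sqrt{1-z}=\sum_k \gamma_k z^k$ every coefficient $\gamma_k$ with $k\ge 1$ is nonpositive. This is done by showing that $\gamma_k/c_k$ (with $c_k$ the coefficients of $\sqrt{1-z}$) is a ratio $P(k)/(k-3/2)^{\underline{n}}$ for an explicit polynomial $P$ of degree $n$, and then counting real roots of $P$ using the sign pattern of the $b_k$ (positive unless $3\mid k$). Since $\sum_k \gamma_k=0$ at $z=1$, one gets $\sum_{k\ge 1}|\gamma_k|=\gamma_0=1$, so the triangle inequality forbids zeros of $T_n(z)\sqrt{1-z}$ on $\overline{\mathbb{D}}\setminus\{1\}$, and $T_n(1)>0$ is checked separately. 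That argument is uniform in $n$ from the outset and never needs to know where the zeros of the sections actually lie --- precisely the information your approach would have to extract the hard way.
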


The case $n-m=d>2$ hinges on the following conjecture. 

\begin{conjecture}\label{q-zeros} For $d>2$  
the partial sums of the power series $\sqrt{1+z+\cdots+z^d} = \sum_{k=0}^\infty b_k^{(d)} z^k$ have no zeros in $\overline{\mathbb D}$. 
\end{conjecture}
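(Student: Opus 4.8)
The plan is to prove the conjecture by first \emph{localizing} the possible zeros of the partial sums $S_N^{(d)}(z)=\sum_{k=0}^{N}b_k^{(d)}z^k$ to the boundary singularities of the limit function, and then analyzing $S_N^{(d)}$ locally near those points. The starting point is the factorization
\[
\sqrt{1+z+\cdots+z^{d}}=(1-z^{d+1})^{1/2}(1-z)^{-1/2}=:h_d(z),
\]
which follows from $1+z+\cdots+z^{d}=(1-z^{d+1})/(1-z)$ and yields the exact convolution
\[
b_k^{(d)}=\sum_{m\ge 0}e_m\,c_{k-m(d+1)},\qquad c_k=4^{-k}\binom{2k}{k},\quad e_0=1,\ e_m=-\frac{c_m}{2m-1}\ (m\ge 1),
\]
the sum being finite for each $k$ (over $0\le m(d+1)\le k$), where $c_k>0$ is strictly decreasing and each $e_m<0$. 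Singularity analysis at the square-root branch points shows $b_k^{(d)}=O(k^{-3/2})$, so $\sum_k|b_k^{(d)}|<\infty$ and $S_N^{(d)}\to h_d$ \emph{uniformly} on $\overline{\mathbb D}$. On $\overline{\mathbb D}$ the limit $h_d$ vanishes only at the nontrivial $(d+1)$-st roots of unity $\omega_1,\dots,\omega_d\in\partial\mathbb D$, where it has simple square-root branch points $h_d(z)\sim A_j(\omega_j-z)^{1/2}$, and $|h_d|$ is bounded below away from these points.

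Because the convergence is uniform and $|h_d|$ is bounded below on $\overline{\mathbb D}\setminus\bigcup_j D(\omega_j,\varepsilon)$, the partial sums cannot vanish there once $N$ is large. The finitely many remaining $N$ are checked directly; in particular, for $N\le d$ the partial sum is $\sum_{k=0}^{N}c_k z^k$, whose positive strictly decreasing coefficients make it zero free in $\overline{\mathbb D}$ by the strict Enestr\"om--Kakeya inequality. This reduces the conjecture to showing that $S_N^{(d)}$ has no zeros in each shrinking neighborhood $D(\omega_j,\varepsilon)\cap\overline{\mathbb D}$.

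Near a fixed $\omega_j$ the plan is a Szeg\H{o}-type local analysis. Rescaling $z=\omega_j e^{-\zeta/N}$ (so that $\overline{\mathbb D}$ corresponds to $\{\operatorname{Re}\zeta\ge 0\}$) and using $b_k^{(d)}\sim A_j\sqrt{\omega_j}\,(-\tfrac{1}{2\sqrt\pi})k^{-3/2}\omega_j^{-k}$, one expects $N^{-1/2}S_N^{(d)}(z)$ to converge to a universal entire function $\Phi(\zeta)$, namely the Szeg\H{o} limit of the Taylor partial sums of $(1-w)^{1/2}$ at its singularity $w=1$, expressible through the incomplete gamma function. Crucially, this local model is the \emph{same} for every $\omega_j$ and every $d$ (only the constant $A_j$ and a phase change), so the whole conjecture collapses to one statement: $\Phi$ has no zeros in the half-plane $\{\operatorname{Re}\zeta\ge 0\}$. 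Theorem~\ref{thm-no-zeros} is exactly the first instance of this universal phenomenon, at the conjugate pair $\omega=e^{\pm 2\pi i/3}$.

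The main obstacle is twofold. First, one must prove rigorously that the universal model $\Phi$ is zero free in the relevant half-plane, an entire-function zero-localization problem that is the true analytic core; here the $k^{-3/2}$ decay with \emph{oscillating} signs defeats the triangle inequality and the monotone-coefficient (Enestr\"om--Kakeya) criteria that dispose of the initial block. Second, and this is where $d>2$ genuinely exceeds the established $d=2$ case, one needs the convergence $N^{-1/2}S_N^{(d)}\to\Phi$ to be uniform in $N$ and, simultaneously, across all $d$ branch points $\omega_1,\dots,\omega_d$, which crowd together and interact as $d$ grows. Controlling these interactions and the error terms uniformly in $d$, so that no zero is created where the local regions overlap, is the step I expect to be hardest, and it is the reason the statement remains conjectural for $d>2$.
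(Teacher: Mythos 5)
There is no proof in the paper for you to match: the statement you were given is Conjecture~\ref{q-zeros}, which the paper leaves open. The paper proves only the $d=2$ analogue (Theorem~\ref{thm-no-zeros}), and by a completely different, global method: Proposition~\ref{prop-all-negative} shows that every coefficient of $T_n(z)\sqrt{1-z}$ after the constant term is nonpositive (strictly negative beyond degree $n$), the coefficients sum to zero at $z=1$, and the triangle inequality together with $T_n(1)>\sqrt3/3$ (Lemma~\ref{lemma-taylor-at1}) gives nonvanishing on $\overline{\mathbb D}$. For $d>2$ the paper proposes the route through positivity of the trigonometric polynomials $\mathcal T_n^{(d)}$ (Conjecture~\ref{q-positive}), not a Szeg\H{o}-type local analysis. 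Your proposal is therefore a research program rather than a proof, and by your own account its ``true analytic core'' --- zero-freeness of the scaling limit $\Phi$ on $\{\operatorname{Re}\zeta\ge 0\}$, and the uniform error control --- is exactly what is missing. Since the conjecture is open, that gap is not a bookkeeping omission; it is the entire problem.

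Several concrete points in the sketch also need repair. The normalization is wrong: since $\sum_k|b_k^{(d)}|<\infty$, the partial sums $S_N^{(d)}$ are uniformly bounded (indeed of size $O(N^{-1/2})$ in the window $z=\omega_je^{-\zeta/N}$), so $N^{-1/2}S_N^{(d)}\to 0$ identically and the meaningful statement is $N^{1/2}S_N^{(d)}(\omega_je^{-\zeta/N})\to\Phi(\zeta)$; relatedly, the asymptotics of $b_k^{(d)}$ is a \emph{sum} of $k^{-3/2}$ contributions from all $d$ branch points, not the single-term formula you wrote, so the local model at $\omega_j$ must absorb oscillatory cross-terms from the other $\omega_i$. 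Second, for fixed $d$ the branch points are separated by $2\pi/(d+1)$ independently of $N$, so they do not ``crowd'' as $N\to\infty$; the genuine uniformity problem is in $d$: your scheme yields at best nonvanishing for $N\ge N_0(d)$ with an ineffective threshold, leaving for each of the infinitely many values of $d$ a finite but unbounded list of cases for which ``checked directly'' is not an available argument, and the intermediate regime between the fixed-$\varepsilon$ Hurwitz region and the $O(1/N)$ window needs its own uniform asymptotics before zero-freeness of $\Phi$ transfers to $S_N^{(d)}$. The parts that are sound --- the factorization $h_d=(1-z^{d+1})^{1/2}(1-z)^{-1/2}$, the convolution with $e_m=-c_m/(2m-1)$, the identity $b_k^{(d)}=c_k$ for $k\le d$ with the strict Enestr\"om--Kakeya conclusion, and the Hurwitz localization of disk zeros to neighborhoods of the $\omega_j$ --- are a reasonable reduction, but they leave the conjecture exactly as open as the paper states it.
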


The zeros of Taylor polynomials have been extensively studied (e.g.,~\cite{Dvoretzky}, \cite{EdreiSaffVarga}, \cite{JansonNorfolk}, \cite{Szego1936}), usually in regard to their asymptotic distribution. They are often considered for hypergeometric power series such as $e^z$, $\sin z$, $(1+z)^p$, etc. The series in Theorem~\ref{thm-no-zeros} is not hypergeometric and its coefficients are neither of constant sign nor of decreasing magnitude (Lemma~\ref{lemma-signs-coeffs}).  

Rewriting the function $\sqrt{1+z+\cdots+z^d}$ as $\sqrt{(1-z^{d+1})/(1-z)}$ makes it clear that its real part is positive in $\mathbb D$. This suggests an approach to Conjecture~\ref{q-zeros} via the positivity of certain trigonometric polynomials. 

\begin{conjecture}\label{q-positive} 
For $d\ge 2$ the trigonometric polynomials $\mathcal T_n^{(d)}(t)=\sum_{k=0}^n b_k^{(d)} \cos kt$ are positive on $\mathbb R$. 
\end{conjecture}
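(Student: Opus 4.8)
The plan is to prove the equivalent analytic statement that the $n$-th Fourier partial sum of an explicit nonnegative function stays positive. Since the coefficients $b_k^{(d)}$ are real, writing $P_n(z)=\sum_{k=0}^n b_k^{(d)} z^k$ gives $\mathcal T_n^{(d)}(t)=\operatorname{Re} P_n(e^{it})$, so $\mathcal T_n^{(d)}$ is precisely the degree-$n$ Fourier partial sum of the boundary function $F_d(t):=\operatorname{Re} g(e^{it})$, where $g(z)=\sqrt{(1-z^{d+1})/(1-z)}$. On $|z|=1$ one computes $1+z+\cdots+z^d=e^{idt/2}\sin((d+1)t/2)/\sin(t/2)$, and tracking the correct branch of the square root produces an explicit formula exhibiting $F_d\ge 0$, vanishing at the unit-circle zeros $t=2\pi j/(d+1)$, $j=1,\dots,d$, of $1+z+\cdots+z^d$ (equivalently, the branch points of $g$). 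I would record at the outset the payoff: because $\mathcal T_n^{(d)}=\operatorname{Re} P_n(e^{it})$, positivity forces the curve $t\mapsto P_n(e^{it})$ into the open right half-plane, so it cannot wind around $0$; by the argument principle $P_n$ is then zero-free in $\overline{\mathbb D}$. This is exactly how Conjecture~\ref{q-positive} yields Conjecture~\ref{q-zeros}, and for $d=2$ recovers Theorem~\ref{thm-no-zeros}.

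The first genuine step is to control $\mathcal T_n^{(d)}$ on compact arcs bounded away from the critical angles $t_c=2\pi j/(d+1)$. Here I would use singularity analysis: the only singularities of $g$ on the closed disk lie on $|z|=1$ at the $(d+1)$-th roots of unity other than $z=1$ (the factor $1-z^{d+1}$ has simple zeros there, while $z=1$ is removable), and each is a square-root branch point. Transfer theorems then give $b_k^{(d)}=O(k^{-3/2})$, with explicit oscillatory main terms $\zeta^{-k}k^{-3/2}$ contributed by each branch point $\zeta$. Summing the tail yields $\bigl|\sum_{k>n} b_k^{(d)}\cos kt\bigr|=O(n^{-1/2})$ uniformly, while on any fixed arc avoiding the $t_c$ one has $F_d\ge\delta>0$; hence $\mathcal T_n^{(d)}>0$ there for all large $n$, the finitely many small $n$ being dispatched by inspection. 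I note that elementary monotonicity criteria of Vietoris type are unavailable, since by Lemma~\ref{lemma-signs-coeffs} the coefficients are neither of one sign nor monotone.

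The crux, and the step I expect to be the main obstacle, is the behaviour in the neighbourhoods of the $t_c$ that shrink as $n\to\infty$, where $F_d$ vanishes like $c\sqrt{|t-t_c|}$. On the scale $|t-t_c|\sim 1/n$ the Dirichlet partial sum of a square-root cusp exhibits a universal Gibbs-type profile, and positivity of $\mathcal T_n^{(d)}$ amounts to showing that the associated undershoot never reaches $0$. The honest difficulty is that in the representation $\mathcal T_n^{(d)}(t)=\frac{1}{2\pi}\int_{-\pi}^{\pi} D_n(t-s)\,F_d(s)\,ds$ the Dirichlet kernel $D_n$ oscillates rather than being a positive (Fej\'er) kernel, so positivity is not a formal consequence of $F_d\ge 0$ and must be extracted from the fine structure of the cusp. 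I would attack this by computing the rescaled local limit profile together with its first correction from the precise asymptotics of $b_k^{(d)}$ at the resonant frequencies, and proving that the limit profile stays nonnegative. As an alternative that would bypass the delicate oscillation estimates, I would look for an exact nonnegative representation of $\mathcal T_n^{(d)}$---a Fej\'er--Riesz sum-of-squares factorization, or a positive-kernel identity exploiting the algebraic relation $g^2=1+z+\cdots+z^d$; finding such an identity is itself the principal challenge, and is presumably what separates the proven case $d=2$ from the general conjecture.
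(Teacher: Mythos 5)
You should first be clear about the status of this statement: it is Conjecture~\ref{q-positive}, which the paper leaves \emph{open} (the author states that moving beyond $d=1$ ``seems difficult, even for $d=2$''), so there is no proof in the paper to compare against — and your proposal does not close it either. Your setup is correct and matches the paper's framing: since the $b_k^{(d)}$ are real, $\mathcal T_n^{(d)}(t)=\operatorname{Re}P_n(e^{it})$ is the $n$-th Fourier partial sum of the nonnegative boundary function $F_d(t)=\operatorname{Re}\sqrt{(1-z^{d+1})/(1-z)}\big|_{z=e^{it}}$; positivity does imply zero-freeness of $P_n$ in $\overline{\mathbb D}$ (this is the paper's maximum-principle remark); and singularity analysis at the square-root branch points gives $b_k^{(d)}=O(k^{-3/2})$, hence $\mathcal T_n^{(d)}=F_d+O(n^{-1/2})>0$ for large $n$ on arcs where $F_d\ge\delta>0$. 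But that is the easy regime. The genuine gap sits exactly where you place it yourself: in the $O(1/n)$-neighbourhoods of the cusps $t_c=2\pi j/(d+1)$, where $F_d(t)\asymp\sqrt{|t-t_c|}$, the uniform tail bound $O(n^{-1/2})$ is of the same order as the function, so nothing you have written prevents the partial sum from dipping below zero there. Your two remedies — computing the rescaled local Gibbs-type profile and proving it nonnegative, or finding a Fej\'er--Riesz-type nonnegative representation — are programs, not arguments; neither is carried out, and the first would in addition require uniform control of the error in the local limit, plus effective constants for the ``finitely many small $n$'' that mere inspection does not supply. So the proposal establishes nothing beyond what the paper already records as known.

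One misreading is worth correcting, because it affects where you would look next. You write that a sum-of-squares identity ``is presumably what separates the proven case $d=2$ from the general conjecture.'' The case $d=2$ of Conjecture~\ref{q-positive} is \emph{not} proven. What the paper proves for $d=2$ is the weaker Conjecture~\ref{q-zeros} (Theorem~\ref{thm-no-zeros}), and by an entirely different mechanism: one multiplies $T_n(z)$ by $\sqrt{1-z}$, shows that every coefficient $\gamma_k$ of the product with $k>0$ is nonpositive (Proposition~\ref{prop-all-negative}, which rests on the sign pattern of Lemma~\ref{lemma-signs-coeffs} and a root-counting argument for an auxiliary polynomial), and then concludes $T_n\ne 0$ on $\overline{\mathbb D}\setminus\{1\}$ from $\sum_{k\ge1}|\gamma_k|=\gamma_0$ by the triangle inequality, with the point $z=1$ handled by $T_n(1)>\sqrt3/3$ (Lemma~\ref{lemma-taylor-at1}). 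No positivity of $\mathcal T_n^{(2)}$ and no positive-kernel identity is known even for $d=2$. Consequently, if your real target is zero-freeness for $d>2$ (Conjecture~\ref{q-zeros}), the route with a track record is the paper's coefficient-sign method applied to $T_n(z)\sqrt{1-z}$, not the strictly stronger trigonometric positivity you set out to prove.
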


By the maximum principle, Conjecture~\ref{q-positive} implies Conjecture~\ref{q-zeros}. The case $d=1$ of Conjecture~\ref{q-positive} follows from $\sum_{k=1}^\infty |b_k^{(1)}| = 1 = b_0^{(1)}$. Moving beyond this seems difficult, even for $d=2$. The literature on the trigonometric series with positive partial sums (e.g., \cite{Belov, BrownWilson2} and references therein) generally addresses the case of monotone coefficients, whereas for $d\ge 2$ the sequence $\{|b_k^{(d)}|\}$ is oscillating. 

Let us describe the prior results in more detail. The Hardy norm ($H^p$ norm) of a holomorphic function $f\colon \mathbb D\to\mathbb C$ is 
\[
\|f\|_p = \left(\sup_{0<r<1} \frac{1}{2\pi} \int_0^{2\pi} |f(re^{it})|^p\,dt \right)^{1/p}, \quad 1\le p<\infty,
\]
with $\|f\|_\infty = \sup_{\mathbb D} |f|$. Landau's inequality~\cite{Landau} can be stated as
\begin{equation}\label{eq-Landau-thm} 
\|S_n\|_\infty \le \sum_{k=0}^n \binom{-1/2}{k}^2    
\end{equation}
where $S_n(z)=\sum_{k=0}^n a_k z^k$ is a partial sum (section) of $f$ and  $\|f\|_\infty\le 1$. The right hand side of~\eqref{eq-Landau-thm} grows indefinitely as $n\to\infty$, which one can infer, via Parseval's theorem, from the fact that the $H^2$ norm of $(1-z)^{-1/2}$ is infinite. 

After some rotation of the unit disk, one can assume that the maximum of $|S_n|$ is attained at $1$. Thus, the problem of estimating $\|S_n\|_\infty$ amounts to estimating the coefficient sum $a_0+\dots+a_n$. Similarly, for $\|S_{m, n}\|_\infty$ one needs to estimate $a_m+\dots +a_n$. 

Generalizing Landau's theorem, Sz\'{a}sz proved the following.

\begin{theorem} \label{thm-szasz} \cite[Satz I]{Szasz1918}
Suppose that complex numbers $\mu_0,\dots, \mu_n$, $\lambda_0,\dots,\lambda_n$ are such that the polynomial $P_\lambda(z) = \lambda_0+\lambda_1 z+ \dots +\lambda_n z^n$ satisfies
\begin{equation}\label{eq-szasz-assumption}
P_\lambda(z)^2 
= \mu_0 + \mu_1 z + \dots + \mu_n z^n + o(z^{n}),\quad z\to 0.
\end{equation}
Then for any function $f(z)=\sum_{k=0}^\infty a_kz^k$ with $\|f\|_\infty \le 1$ we have  
\begin{equation}\label{eq-szasz-conclusion}
|\mu_n a_0 + \mu_{n-1}a_1+ \cdots + \mu_0 a_n|\le 
|\lambda_0|^2 + \cdots + |\lambda_n|^2.
\end{equation}
If, in addition, $P_\lambda$ does not vanish in $\mathbb D$, then equality is attained in~\eqref{eq-szasz-conclusion} by the Blaschke product 
$f(z) = P_\lambda^*(z)/P_\lambda(z)$ where $P_\lambda^*(z) = \bar \lambda_n + \cdots +\bar \lambda_0 z^n$ is the conjugate-reciprocal polynomial of $P_\lambda$. 
\end{theorem}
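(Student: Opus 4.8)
The plan is to read the left-hand side of~\eqref{eq-szasz-conclusion} as a single Taylor coefficient of a product and then estimate that coefficient by a contour integral. Writing $P_\lambda(z)^2=\sum_{j\ge 0}\mu_j z^j$ (the hypothesis~\eqref{eq-szasz-assumption} pins down $\mu_0,\dots,\mu_n$, which is all we use), the quantity $\mu_n a_0+\cdots+\mu_0 a_n=\sum_{k=0}^n\mu_{n-k}a_k$ is exactly the coefficient of $z^n$ in the Cauchy product $g=P_\lambda^2 f$. Since $f\in H^\infty\subset H^2$ and $P_\lambda^2$ is a polynomial, $g\in H^2$, so its $n$-th Taylor coefficient is recovered from boundary values:
\begin{equation*}
\mu_n a_0+\cdots+\mu_0 a_n=\frac{1}{2\pi}\int_0^{2\pi}P_\lambda(e^{it})^2\,f(e^{it})\,e^{-int}\,dt,
\end{equation*}
where $f(e^{it})$ are the radial boundary values furnished by Fatou's theorem. (Equivalently one integrates over $|z|=r<1$ and lets $r\to1$, avoiding boundary-value theory entirely.)

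First I would establish the inequality. Pulling the absolute value inside the integral and using $|f|\le1$ gives
\begin{equation*}
|\mu_n a_0+\cdots+\mu_0 a_n|\le\frac{1}{2\pi}\int_0^{2\pi}|P_\lambda(e^{it})|^2\,dt=\sum_{k=0}^n|\lambda_k|^2,
\end{equation*}
the final equality being Parseval's identity for $P_\lambda$. This is~\eqref{eq-szasz-conclusion}.

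For the equality statement, the role of the non-vanishing hypothesis is to make $f=P_\lambda^*/P_\lambda$ an admissible competitor. Indeed, if $P_\lambda$ has no zeros in $\mathbb D$, then $f$ is holomorphic in $\mathbb D$; and since the reflection identity $P_\lambda^*(e^{it})=e^{int}\,\overline{P_\lambda(e^{it})}$ gives $|P_\lambda^*|=|P_\lambda|$ on $\partial\mathbb D$, the maximum principle yields $\|f\|_\infty\le1$ (in fact $f$ is a finite Blaschke product). It remains to check that this $f$ turns both inequalities into equalities. Using the same reflection identity,
\begin{equation*}
P_\lambda(e^{it})^2\,f(e^{it})\,e^{-int}=P_\lambda(e^{it})\,P_\lambda^*(e^{it})\,e^{-int}=|P_\lambda(e^{it})|^2\ge0,
\end{equation*}
so the integrand is real and nonnegative and the integral equals $\frac{1}{2\pi}\int_0^{2\pi}|P_\lambda(e^{it})|^2\,dt=\sum_{k=0}^n|\lambda_k|^2$; hence equality holds in~\eqref{eq-szasz-conclusion}.

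The inequality is routine once the integral representation is in hand; the step deserving the most care is the equality analysis. Its genuine content is twofold: verifying admissibility of the competitor, which is precisely where the non-vanishing hypothesis enters (otherwise $f$ would have a pole in $\mathbb D$), and confirming that multiplying $f$ by $P_\lambda^2$ and stripping off $e^{int}$ yields the nonnegative kernel $|P_\lambda|^2$, so that no cancellation survives in the integral. I would also take care with the passage between Taylor coefficients and boundary integrals; the cleanest rigorous route is the $r\to1$ limit, using $\frac{1}{2\pi}\int_0^{2\pi}|P_\lambda(re^{it})|^2\,dt=\sum_k|\lambda_k|^2 r^{2k}$ together with $|f(re^{it})|\le1$.
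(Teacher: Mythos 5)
Your proof is correct. Note that the paper does not prove this statement at all—it is quoted as \cite[Satz I]{Szasz1918}—so there is no internal argument to compare against; your proof (reading $\mu_n a_0+\cdots+\mu_0 a_n$ as the $n$-th coefficient of $P_\lambda^2 f$, bounding the boundary integral by Parseval, and getting equality from $P_\lambda^2 f e^{-int}=|P_\lambda|^2$ on the circle) is precisely the classical Landau--Sz\'{a}sz argument, and the $r\to 1$ variant you mention makes it fully rigorous without invoking Fatou boundary values. One tiny remark: in the equality case the integral is not even needed, since $P_\lambda^2 f = P_\lambda P_\lambda^*$ is a polynomial whose coefficient of $z^n$ is $\sum_{k=0}^n \lambda_k\bar\lambda_k$ by inspection; also, since the hypothesis only excludes zeros of $P_\lambda$ in the open disk, any zeros on $\partial\mathbb D$ are matched by zeros of $P_\lambda^*$ of the same multiplicity, so they are removable singularities of $f$ and your maximum-principle step survives.
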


The studies of extremal problems on Hardy spaces go far beyond Theorem~\ref{thm-szasz}, but since the further developments will not be invoked here, we refer an interested reader to Chapter~8 of Duren's book~\cite{Duren}.

To estimate $\|S_n\|_\infty$, one sets $\mu_0=\cdots = \mu_n=1$ in Theorem~\ref{thm-szasz} which yields $P_\lambda(z)=(1-z)^{-1/2} + o(z^{n})$. The binomial expansion leads to~\eqref{eq-Landau-thm}, which is sharp since $P_\lambda\ne 0$ in $\overline{\mathbb D}$ by the Enestr\"om–Kakeya theorem~\cite{GardnerGovil}. 

To estimate $\|S_{m, n}\|_\infty$, one chooses $\mu_k=1$ for $0\le k\le n-m$ and $\mu_k=0$ for $n-m<k\le n$. Thus, $P_\lambda(z)=(1+z+\cdots +z^{n-m})^{1/2} + o(z^{n})$ in this case. Having observed this in~\cite[\S7]{Szasz1918}, Sz\'{a}sz proceeded to analyze the case $n-m=1$, when the binomial expansion yields the sharp bound~\cite[Satz II]{Szasz1918}  
\begin{equation}\label{eq-Szasz-binomial} 
\|S_{n-1,n}\|_\infty \le \sum_{k=0}^n \binom{1/2}{k}^2.     
\end{equation}
In contrast to~\eqref{eq-Landau-thm}, the right hand side of~\eqref{eq-Szasz-binomial} has a finite limit as $n\to\infty$. Indeed, by Parseval's theorem 
\[
\sum_{k=0}^\infty \binom{1/2}{k}^2 = 
\frac{1}{2\pi} \int_0^{2\pi} |1+e^{it}|\,dt
= \frac{1}{2\pi} \int_0^{2\pi} |2\cos (t/2)|\,dt = \frac{4}{\pi}.
\] 

In order to estimate $\|S_{n-2, n}\|_\infty$, or equivalently the sum $a_{n-2}+a_{n-1}+a_n$, one has to work with the power series 
\begin{equation}\label{eq-taylor-sqrt2}
\sqrt{1+z+z^2} = \sum_{k=0}^\infty b_k z^k. 
\end{equation}
The coefficients in~\eqref{eq-taylor-sqrt2} are less explicit than the binomial coefficients, although they can be quickly computed using the recurrence $2k b_{k}  = (3-2k)b_{k-1} + (6-2k)b_{k-2}$, see Lemma~\ref{lemma-recurrence-coeffs}. We can now state the main result of this paper.

\begin{theorem}\label{thm-three-terms} Let $f(z)=\sum_{k=0}^\infty a_k z^k$  and $S_{m, n}(z)=\sum_{k=m}^n a_k z^k$. If $\|f\|_\infty\le 1$, then for all $n\ge 2$
\begin{equation}\label{eq-Szasz-trinomial} 
\|S_{n-2,n}\|_\infty \le \sum_{k=0}^n b_k^2 < \frac{1}{3} + \frac{2\sqrt{3}}{\pi}     
\end{equation}
where $b_k$ is as in~\eqref{eq-taylor-sqrt2}. Equality is attained in the first part of~\eqref{eq-Szasz-trinomial} by $f(z)=z^n T_n(1/z)/T_n(z)$, where   
\begin{equation}\label{eq-taylor-poly}
T_n(z) = \sum_{k=0}^n b_k z^k.
\end{equation}
\end{theorem}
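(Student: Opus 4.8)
The plan is to apply Sz\'asz's theorem (Theorem~\ref{thm-szasz}) with a choice of coefficients tailored to the three-term segment, then to identify the extremal Blaschke product via the non-vanishing result (Theorem~\ref{thm-no-zeros}), and finally to evaluate the limiting constant by Parseval's identity.

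For the first inequality I would set $\mu_0=\mu_1=\mu_2=1$ and $\mu_3=\cdots=\mu_n=0$. Since $1+z+z^2$ has no terms of degree exceeding two, the Taylor polynomial $T_n$ of $\sqrt{1+z+z^2}$ satisfies $T_n(z)^2 = 1+z+z^2 + O(z^{n+1}) = \mu_0+\mu_1 z+\cdots+\mu_n z^n + o(z^n)$, so that $P_\lambda=T_n$ is an admissible choice in~\eqref{eq-szasz-assumption} with $\lambda_k=b_k$. With these $\mu_k$, the left-hand side of~\eqref{eq-szasz-conclusion} collapses to $|a_{n-2}+a_{n-1}+a_n|$. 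To pass from this coefficient sum to the sup-norm I would use the standard normalization: pick $z_0=e^{i\theta}$ at which the polynomial $S_{n-2,n}$ attains its maximum modulus on $\overline{\mathbb D}$, and replace $f(z)$ by the unimodular rotation $e^{i\phi}f(z_0 z)$, whose three-term coefficient sum equals $|S_{n-2,n}(z_0)|$ for a suitable $\phi$. Applying~\eqref{eq-szasz-conclusion} to this rotated function then yields $\|S_{n-2,n}\|_\infty \le \sum_{k=0}^n b_k^2$.

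For sharpness I would invoke Theorem~\ref{thm-no-zeros}, which guarantees $T_n\ne 0$ on $\overline{\mathbb D}$; this is precisely the hypothesis that the equality clause of Theorem~\ref{thm-szasz} requires. The extremal function is then $f=T_n^*/T_n$, and since the coefficients $b_k$ are real, $T_n^*(z)=\sum_{k=0}^n b_{n-k} z^k = z^n T_n(1/z)$, giving the stated Blaschke product $f(z)=z^n T_n(1/z)/T_n(z)$.

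For the strict upper bound I would compute the full sum by Parseval's identity, $\sum_{k=0}^\infty b_k^2 = \frac{1}{2\pi}\int_0^{2\pi}|1+e^{it}+e^{2it}|\,dt$. Writing $1+e^{it}+e^{2it}=e^{it}(1+2\cos t)$ reduces this to $\frac{1}{2\pi}\int_0^{2\pi}|1+2\cos t|\,dt$, which evaluates to $\frac{1}{3}+\frac{2\sqrt{3}}{\pi}$ after splitting the integral at the zeros $t=2\pi/3,\,4\pi/3$ of $1+2\cos t$. Because $1+z+z^2$ vanishes at the primitive cube roots of unity on $\partial\mathbb D$, the function $\sqrt{1+z+z^2}$ is not a polynomial, so infinitely many $b_k$ are nonzero and the partial sum $\sum_{k=0}^n b_k^2$ is strictly smaller than this limit for every $n$. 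The substantive difficulty of the whole argument is the non-vanishing of $T_n$, which is isolated in Theorem~\ref{thm-no-zeros} and used here as a black box; the remaining steps are the Sz\'asz reduction and an elementary Parseval computation.
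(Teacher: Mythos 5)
Your proposal is correct and follows essentially the same route as the paper: the Sz\'asz reduction with $\mu_0=\mu_1=\mu_2=1$, $\mu_k=0$ for $k\ge 3$ (so that $P_\lambda=T_n$), Theorem~\ref{thm-no-zeros} as the black box yielding the extremal Blaschke product $z^nT_n(1/z)/T_n(z)$, and Parseval's identity giving $\sum_{k=0}^\infty b_k^2 = \tfrac{1}{3}+\tfrac{2\sqrt 3}{\pi}$. You additionally spell out two details the paper leaves implicit---the rotation normalization converting the coefficient-sum bound into a sup-norm bound, and the strictness of the second inequality via the fact that infinitely many $b_k$ are nonzero---both of which are handled correctly.
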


As noted above, the left hand side of~\eqref{eq-Szasz-trinomial} can be replaced by $|a_{n-2}+a_{n-1}+a_n|$. However, an upper bound for $|a_{n-2}|+|a_{n-1}|+|a_n|$ would be different. Indeed, by expanding the M\"obius transformation 
\[
\frac{z-1/2}{1-z/2} = -\frac{1}{2} + \frac{3}{4}z+\frac{3}{8}z^2+\cdots
\]
we find
$|a_0|+|a_1|+|a_2| = 13/8$ which exceeds the right hand side of~\eqref{eq-Szasz-trinomial}.

An upper bound for the sum of three non-consecutive coefficients would also be greater than~\eqref{eq-Szasz-trinomial}. For example, consider the Blaschke product 
\[
\frac{1+z+2z^3}{2+z^2+z^3} = \frac{1}{2} + \frac{z}{2} - \frac{z^2}{4} + \frac{z^3}{2} + \cdots 
\]
for which $a_0+a_1+a_3 = 3/2$. 
  
\section{Preliminaries}\label{sec-preliminary}

We begin with an observation that~\eqref{eq-taylor-sqrt2}, being a differentiably finite power series~\cite{Stanley}, admits a recurrence relation with polynomial coefficients. 

\begin{lemma}\label{lemma-recurrence-coeffs}
The coefficients in~\eqref{eq-taylor-sqrt2} satisfy the recurrence relation 
\begin{equation}\label{eq-recurrence-coeffs}
2k b_{k}  = (3-2k) b_{k-1} 
+ (6-2k) b_{k-2}, \quad k\ge 1    
\end{equation}
where $b_0=1$ and $b_{-1} = 0$. 
\end{lemma}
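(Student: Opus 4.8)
The plan is to derive the recurrence from the first-order linear differential equation satisfied by $y(z)=\sqrt{1+z+z^2}$. Since $1+z+z^2$ is nonzero at the origin, the branch with $y(0)=1$ is holomorphic near $0$ and its Taylor series is $\sum_{k\ge 0} b_k z^k$. Squaring gives the identity $y^2 = 1+z+z^2$, and differentiating it yields $2yy' = 1+2z$. Multiplying through by $y$ and substituting $y^2=1+z+z^2$ converts this into an equation with polynomial coefficients,
\[
2(1+z+z^2)\,y' = (1+2z)\,y,
\]
which is the form best suited to reading off a coefficient recurrence.

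First I would substitute $y=\sum_{k\ge 0}b_k z^k$ and $y'=\sum_{k\ge 0}(k+1)b_{k+1}z^k$ into this equation and compare the coefficients of $z^k$ on the two sides. On the left, the three monomials of $1+z+z^2$ produce the shifts $2(k+1)b_{k+1}+2k\,b_k+2(k-1)b_{k-1}$, while on the right $(1+2z)y$ contributes $b_k+2b_{k-1}$. Equating these and collecting terms gives
\[
2(k+1)b_{k+1} = (1-2k)\,b_k + (4-2k)\,b_{k-1},
\]
valid for every $k\ge 0$ under the convention $b_{-1}=0$. Replacing $k$ by $k-1$ then yields exactly the stated recurrence~\eqref{eq-recurrence-coeffs}.

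It remains to record the initial data. Evaluating at $z=0$ gives $b_0=y(0)=1$, and with $b_{-1}=0$ the case $k=1$ of~\eqref{eq-recurrence-coeffs} reads $2b_1=b_0$, consistent with the value $b_1=1/2$ from the binomial expansion of $\sqrt{1+(z+z^2)}$. There is no genuine obstacle in this argument; the only points that call for a little care are aligning the three index shifts coming from $1+z+z^2$ correctly and checking the bookkeeping for small $k$, both of which are routine. (The same ODE method, applied to $\sqrt{1+z+\cdots+z^d}$, would likewise yield the polynomial-coefficient recurrence for the sequence $b_k^{(d)}$ referenced in the conjectures.)
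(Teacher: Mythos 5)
Your proof is correct and follows essentially the same route as the paper: both derive the first-order ODE $2(1+z+z^2)y' = (1+2z)y$ satisfied by $y=\sqrt{1+z+z^2}$ and obtain the recurrence by comparing power-series coefficients. The only cosmetic difference is that you compare coefficients of $z^k$ and then shift the index, while the paper collects coefficients of $z^{k-1}$ directly; the computations are identical.
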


\begin{proof} Let $p(z)=1+z+z^2$. Differentiating $f(z)=\sqrt{p(z)}$, we obtain $2pf'-p'f = 0$. Hence 
\[
2(1+z+z^2) \sum_{k=1}^\infty k b_k z^{k-1}
 - (1+2z) \sum_{k=0}^\infty b_k z^k = 0. 
\]
Collecting the terms with $z^{k-1}$ yields
\[
2k b_k + 2(k-1)b_{k-1} + 2(k-2)b_{k-2}
- (b_{k-1} + 2b_{k-2}) = 0
\]
which is equivalent to~\eqref{eq-recurrence-coeffs}. 
\end{proof}

Since the factors $(3-2k)$ and $(6-2k)$ in~\eqref{eq-recurrence-coeffs} are negative for $k>3$, the coefficients $b_k$ have a pattern of oscillation, which is clarified by the following lemma. 

\begin{lemma}\label{lemma-signs-coeffs} The coefficients in~\eqref{eq-taylor-sqrt2} satisfy $b_k>0$ when $k$ is not divisible by $3$. Also, $b_k<0$ when $k$ is divisible by $3$, with the exception of $b_0=1$.
\end{lemma}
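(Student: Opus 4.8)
The plan is to argue directly from the recurrence in Lemma~\ref{lemma-recurrence-coeffs}, by induction on blocks of three consecutive indices. First I would record that for $k\ge 4$ both coefficients $3-2k$ and $6-2k$ are negative, so the recurrence reads $2k\,b_k = -(2k-3)b_{k-1}-(2k-6)b_{k-2}$. This settles one residue class immediately: if $k=3m$ with $b_{3m-1}>0$ and $b_{3m-2}>0$, the right-hand side is a sum of two negative terms, forcing $b_{3m}<0$. The difficulty lies entirely in the classes $k\equiv 1,2\pmod 3$, where $b_{k-1}$ and $b_{k-2}$ have opposite signs and the recurrence produces a difference of two positive quantities; there the sign is decided by a magnitude comparison rather than by inspection, and a naive sign induction fails.

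To control these magnitudes I would strengthen the inductive hypothesis. For $m\ge 1$ let $S(m)$ assert the four statements $b_{3m}<0$, $b_{3m+1}>0$, $b_{3m+2}>0$, and the monotonicity $b_{3m+1}<b_{3m+2}$. This last inequality is the engine of the proof: combined with $b_{3m}<0$ and the identity $6m\,|b_{3m}| = (6m-3)b_{3m-1}+(6m-6)b_{3m-2}$ coming from the $k=3m$ recurrence, the bound $b_{3m-2}<b_{3m-1}$ supplied by $S(m-1)$ yields the clean estimate $|b_{3m}| < \tfrac{12m-9}{6m}\,b_{3m-1}$, which I will call $(\ast)$.

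Granting $S(m-1)$ for $m\ge 2$, I would prove $S(m)$ in four steps, using the explicit recurrence expressions $b_{3m+1} = \bigl[(6m-1)|b_{3m}| - (6m-4)b_{3m-1}\bigr]/(6m+2)$ and $b_{3m+2} = \bigl[(6m-2)|b_{3m}| - (6m+1)b_{3m+1}\bigr]/(6m+4)$. Step one gives $b_{3m}<0$ as above and records $(\ast)$. In step two I substitute the formula for $|b_{3m}|$ into the expression for $b_{3m+1}$; after the cancellation $6m(6m-4)-(6m-1)(6m-3)=-3$, the inequality $b_{3m+1}>0$ collapses to $(6m-1)(6m-6)\,b_{3m-2} > -3\,b_{3m-1}$, which is obvious since the left side is nonnegative and the right side negative. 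Step three reduces $b_{3m+2}>0$ to $(6m+1)(6m-4)\,b_{3m-1} > 3\,|b_{3m}|$; now $(\ast)$ gives $3|b_{3m}|<\tfrac{12m-9}{2m}b_{3m-1}<6\,b_{3m-1}\le(6m+1)(6m-4)\,b_{3m-1}$, since $(6m+1)(6m-4)\ge 104$ for $m\ge 2$. Step four re-establishes the monotonicity: the difference $b_{3m+2}-b_{3m+1}$ has the sign of $(6m-2)|b_{3m}|-(12m+5)b_{3m+1}$, which after substitution becomes $(36m^2+18m-1)|b_{3m}| < (12m+5)(6m-4)\,b_{3m-1}$; invoking $(\ast)$ reduces this to the elementary inequality $72m^2-18m-29+\tfrac{3}{2m} < 72m^2-18m-20$, valid for all $m\ge 1$. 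The base data, namely the signs of $b_0,\dots,b_5$ (with $b_0=1$ the sole exception) and the inequality $b_4<b_5$, are checked by direct computation from the recurrence, establishing $S(1)$.

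The step I expect to require the most care is the quantitative bookkeeping in steps three and four: because the recurrence mixes signs for $k\equiv 1,2\pmod 3$, one must carry the monotonicity $b_{3m+1}<b_{3m+2}$ through the induction rather than the signs alone, and the whole argument hinges on the algebraic cancellations (such as $6m(6m-4)-(6m-1)(6m-3)=-3$ and its analogue $-36m^2-18m+1$ in step four) collapsing the messy substituted inequalities to the simple polynomial comparisons stated above. Verifying that these collapses occur exactly as claimed is where the proof is won; everything else is routine.
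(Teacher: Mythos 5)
Your proposal is correct and takes essentially the same route as the paper: a block-of-three induction driven by the recurrence of Lemma~\ref{lemma-recurrence-coeffs}, whose hypothesis is strengthened to include the monotonicity $b_{3m+1}<b_{3m+2}$ of the consecutive positive pair (the paper's claim ``if $0<b_k<b_{k+1}$ then $b_{k+2}<0<b_{k+3}<b_{k+4}$'' is exactly your $S(m-1)\Rightarrow S(m)$), with the base case read off from the first six coefficients. The only difference is bookkeeping: the paper sets $b_k=A$, $b_{k+1}=A+B$ with $A,B>0$ and exhibits each relevant quantity as a sign-definite linear combination of $A$ and $B$, whereas you carry the quantitative bound $|b_{3m}|<\tfrac{12m-9}{6m}\,b_{3m-1}$ through successive eliminations --- the same cancellations appearing in a different guise.
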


\begin{proof} Using~\eqref{eq-recurrence-coeffs} it is easy to find that the series~\eqref{eq-taylor-sqrt2} begins with 
\[1 + \frac{1}{2}z+\frac{3}{8}z^2 - \frac{3}{16}z^3 + \frac{3}{128}z^4 + \frac{15}{256}z^5 + \cdots \]
In particular, $0<b_4<b_5$. The rest of the lemma follows by induction from the following assertion. 

\textbf{Claim}. Suppose $k\ge 1$ and $0<b_k<b_{k+1}$. Then $b_{k+2}<0<b_{k+3}<b_{k+4}$. 

To prove the above claim, write $b_k=A$ and $b_{k+1}=A+B$ where $A, B>0$. The recurrence relation~\eqref{eq-recurrence-coeffs} yields  
\begin{equation}\label{eq-induction-1}
\begin{split}
b_{k+2} &= \frac{-(2k+1)(A+B) - (2k-2) A}{2k+4} \\ 
&= \frac{-(4k-1) A - (2k+1)B}{2k+4} < 0.
\end{split} 
\end{equation}
Using~\eqref{eq-recurrence-coeffs} and then~\eqref{eq-induction-1} we find
\begin{equation}\label{eq-induction-2}
\begin{split} 
b_{k+3} &= \frac{-(2k+3) b_{k+2} - 2k (A+B)}{2k+6}  \\
& = \frac{(2k+3)((4k-1) A + (2k+1)B) - 2k(2k+4)(A+B) }{(2k+4)(2k+6)}  \\
& = \frac{(4k^2 + 2k - 3)A + 3B}{(2k+4)(2k+6)} > 0.
\end{split} 
\end{equation}
One more step of recursion yields 
\begin{equation}\label{eq-induction-3}
\begin{split} 
b_{k+4} - b_{k+3} &= \frac{-(2k+5) b_{k+3} - (2k+2) b_{k+2}}{2k+8} - b_{k+3}  \\
& = -\frac{4k+13}{2k+8} b_{k+3} - \frac{2k+2}{2k+8} b_{k+2}. \\
\end{split} 
\end{equation}
In order to prove that~\eqref{eq-induction-3} is positive, it suffices to show that its coefficients of $A$ and $B$ are positive. From~\eqref{eq-induction-1}--\eqref{eq-induction-2}, the coefficient of $A$ in ~\eqref{eq-induction-3} is 
\[
\frac{-(4k+13)(4k^2 + 2k - 3) + (2k+2)(4k-1)(2k+6)}{(2k+4)(2k+6)(2k+8)} 
\]
where the numerator simplifies to $9(2k+3)>0$. Similarly, the coefficient of $B$ in ~\eqref{eq-induction-3} is 
\[
\frac{-3(4k+13) + (2k+2)(2k+1)(2k+6)}{(2k+4)(2k+6)(2k+8)} 
\]
where the numerator simplifies to $8k^3 + 36k^2 + 28k - 27 >0$. 
\end{proof}

Knowing the signs of the coefficients in~\eqref{eq-taylor-sqrt2}, we can estimate its partial sums at $z=1$. 

\begin{lemma}\label{lemma-taylor-at1} The Taylor polynomials~\eqref{eq-taylor-poly} satisfy $T_n(1)>\sqrt{3}/3$ for $n=0,1,\dots$
\end{lemma}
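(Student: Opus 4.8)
The plan is to control the numbers $t_n := T_n(1) = \sum_{k=0}^n b_k$ through the generating function of the partial-sum sequence, namely $g(z) = \sum_{n\ge 0} t_n z^n = f(z)/(1-z)$ with $f(z)=\sqrt{1+z+z^2}$. Since $f$ is differentiably finite (the identity behind Lemma~\ref{lemma-recurrence-coeffs} is $2(1+z+z^2)f' = (1+2z)f$), so is $g$. I would first convert this into an equation for $g$ using $f=(1-z)g$ and $f'=(1-z)g'-g$. Substituting into $2(1+z+z^2)f'=(1+2z)f$ and using $(1+z+z^2)(1-z)=1-z^3$, the bracket $(1+2z)(1-z)+2(1+z+z^2)$ collapses to $3(1+z)$, giving the clean first-order equation $2(1-z^3)g'(z)=3(1+z)g(z)$.

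Extracting the coefficient of $z^n$ from $2(1-z^3)g'=3(1+z)g$ then yields the recurrence $2(n+1)t_{n+1}=3t_n+3t_{n-1}+2(n-2)t_{n-2}$ (with the convention $t_j=0$ for $j<0$). The decisive structural observation is that for $n\ge 2$ the three coefficients $3$, $3$, $2(n-2)$ are nonnegative and sum to $2(n+1)$, exactly the normalizing factor on the left. Hence $t_{n+1}$ is a convex combination of $t_{n-2},t_{n-1},t_n$, so $t_{n+1}\ge \min\{t_{n-2},t_{n-1},t_n\}$.

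From here the lemma follows by a short induction. The base values $t_0=1$, $t_1=3/2$, $t_2=15/8$ are all at least $1$, and the convex-combination inequality propagates this lower bound, giving $t_n\ge \min\{t_0,t_1,t_2\}=1$ for every $n$. Since $1>\sqrt{3}/3$, the strict inequality $t_n>\sqrt{3}/3$ holds for all $n$; the slack between $1$ and $\sqrt{3}/3$ is precisely what lets the inequality be strict even at $n=0$, where $t_0=1$ would rule out the cleaner-looking bound $t_n>1$.

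I expect the only real work to be the ODE derivation and the coefficient bookkeeping that produces the recurrence; both are mechanical. The single point needing care is the requirement $n\ge 2$ for nonnegativity of the weight $2(n-2)$, which is why the three starting values $t_0,t_1,t_2$ are checked by hand before the induction takes over (at $n=2$ the weight on $t_0$ vanishes, so $t_3$ is a convex combination of $t_1,t_2$ alone). No asymptotic analysis or delicate estimate is needed, which is exactly why the constant $\sqrt{3}/3$ — indeed any constant not exceeding $1$ — emerges essentially for free.
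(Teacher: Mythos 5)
Your proof is correct, but it takes a genuinely different route from the paper. The paper's own argument is Fourier-analytic and structural: it invokes Zygmund's theorem to get $\sum_k |b_k|<\infty$, uses the sign pattern of Lemma~\ref{lemma-signs-coeffs}, and evaluates the sub-series over indices divisible by $3$ by a roots-of-unity filter, $\sum_{k\ge 0} b_{3k} = \tfrac{1}{3}\bigl(f(1)+f(\zeta)+f(\zeta^2)\bigr) = \sqrt{3}/3$ with $\zeta=e^{2\pi i/3}$; since the omitted terms $b_{3k}$ with $3k>n$ are negative and the terms of $T_n(1)$ with index not divisible by $3$ are positive, the bound follows --- that is exactly where the constant $\sqrt{3}/3$ comes from. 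You instead work with the generating function $g=f/(1-z)$ of the partial sums $t_n=T_n(1)$: your ODE $2(1-z^3)g'=3(1+z)g$ is correct (the bracket $(1+2z)(1-z)+2(1+z+z^2)$ does collapse to $3(1+z)$), the extracted recurrence $2(n+1)t_{n+1}=3t_n+3t_{n-1}+2(n-2)t_{n-2}$ checks out against $t_3=27/16$ and $t_4=219/128$, and for $n\ge 2$ the weights $3,3,2(n-2)$ are nonnegative and sum to $2(n+1)$, so the convex-combination induction from $t_0=1$, $t_1=3/2$, $t_2=15/8$ is sound. Your approach buys elementarity and strength: it needs neither Zygmund's theorem nor the sign analysis of Lemma~\ref{lemma-signs-coeffs}, and it yields $T_n(1)\ge 1$, which is strictly stronger than the stated bound and more than sufficient for the rest of the paper, where only $T_n(1)>0$ is ever used. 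What the paper's route buys is economy within its own architecture --- Lemma~\ref{lemma-signs-coeffs} is needed for Proposition~\ref{prop-all-negative} anyway, so the paper simply reuses it --- together with an explanation of the otherwise unmotivated constant $\sqrt{3}/3$ as the exact value of $\sum_k b_{3k}$.
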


\begin{proof} A theorem of Zygmund~\cite[Theorem VI.3.6]{Zygmund} states that the Fourier series of a H\"older continuous function of bounded variation converges absolutely. Since the restriction of $f(z)=\sqrt{1+z+z^2}$ to the unit circle satisfies these assumptions, we have $\sum_{k=0}^\infty |b_k| < \infty$. Hence $T_n\to f$ uniformly on $\overline{\mathbb D}$. Let $\zeta = e^{2\pi i/3}$. By Lemma~\ref{lemma-signs-coeffs}, 
\[
\sum_{k=0}^{\lfloor n/3\rfloor} b_{3k} > 
\sum_{k=0}^\infty b_{3k} = \frac{f(1) + f(\zeta) + f(\zeta^2)}{3} = \frac{\sqrt{3}}{3}. 
\]
Furthermore, the difference 
$T_n(1) - \sum_{k=0}^{\lfloor n/3\rfloor} b_{3k}$ is positive, since any terms that remain in it after cancellation are positive by Lemma~\ref{lemma-signs-coeffs}. 
\end{proof}

\section{Proofs of main results} \label{sec-main-proofs}

Recall that the polynomials $T_n$ are defined by~\eqref{eq-taylor-poly}. In order to prove $T_n\ne 0$ in $\overline{\mathbb D}$, we will study the coefficients of the product $T_n(z)\sqrt{1-z}$. The hope is that since $T_n(z)$ approximates $\sqrt{1+z+z^2}$, the product will be close to $\sqrt{1-z^3}$ coefficient-wise. 

Let $c_0, c_1, \dots$ be the coefficients of the power series $\sqrt{1-z} = \sum_{k=0}^\infty c_k z^k$, that is 
\begin{equation}\label{eq-helper-explicit}
c_k = (-1)^k \binom{1/2}{k} = (-1)^k \frac{(1/2)^{\underline{k}}}{k!}.
\end{equation}
Here and below $x^{\underline{k}} = x(x-1)\cdots (x-k+1)$ denotes the falling factorial. 
It is clear from~\eqref{eq-helper-explicit} that, with the exception of $c_0=1$, all coefficients $c_k$ are negative, and 
\begin{equation}\label{eq-helper-recurrence}
\frac{c_{k}}{c_{k-1}} = \frac{k-3/2}{k},\quad k\ge 1.    
\end{equation}

\begin{lemma}\label{lemma-prod-poly} Let $Q$ be a polynomial of degree $n$ with $Q(1)\ne 0$. Then the coefficients of the power series  
$Q(z) \sqrt{1-z} = \sum_{k=0}^\infty \gamma_k z^k$ 
satisfy     
\begin{equation}\label{eq-prod-lemma-2}
\frac{\gamma_k}{c_k} = 
\frac{P(k)}{(k-3/2)^{\underline{n}}},\quad k=0,1,\dots
\end{equation}    
where $P$ is a polynomial of degree $n$ with leading coefficient $Q(1)$. 
\end{lemma}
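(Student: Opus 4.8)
The plan is to expand the convolution defining the coefficients $\gamma_k$ and reduce everything to ratios of the numbers $c_k$, which the recurrence \eqref{eq-helper-recurrence} controls completely. Write $Q(z)=\sum_{j=0}^n q_j z^j$, so that $\gamma_k=\sum_{j=0}^n q_j c_{k-j}$, where $c_{k-j}$ is understood to vanish when $k-j<0$. Since none of the factors $1/2-i$ $(i\ge 0)$ is zero, \eqref{eq-helper-explicit} shows $c_k\ne 0$ for every $k$, so dividing by $c_k$ is legitimate and I would study $\gamma_k/c_k=\sum_{j=0}^n q_j\,(c_{k-j}/c_k)$.

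Next I would evaluate each ratio $c_{k-j}/c_k$ by telescoping \eqref{eq-helper-recurrence}, written in the form $c_{m-1}/c_m=m/(m-3/2)$. For $k\ge j$ this gives
\[
\frac{c_{k-j}}{c_k}=\prod_{m=k-j+1}^{k}\frac{c_{m-1}}{c_m}=\prod_{m=k-j+1}^{k}\frac{m}{m-3/2}=\frac{k^{\underline{j}}}{(k-3/2)^{\underline{j}}}.
\]
The one point needing care is the range $j>k$, which occurs only for the finitely many indices $k<n$. There the convolution term $q_j c_{k-j}$ is absent, but simultaneously $k^{\underline{j}}=k(k-1)\cdots(k-j+1)=0$, because the factor $k-k=0$ appears in the product once $j>k$. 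Hence the displayed formula may be used for all $0\le j\le n$ at once, the spurious terms contributing nothing, and one obtains $\gamma_k/c_k=\sum_{j=0}^n q_j\,k^{\underline{j}}/(k-3/2)^{\underline{j}}$ for every $k\ge 0$.

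Finally I would clear denominators using the factorization $(k-3/2)^{\underline{n}}=(k-3/2)^{\underline{j}}\,(k-3/2-j)^{\underline{n-j}}$. Multiplying the last identity by $(k-3/2)^{\underline{n}}$ yields \eqref{eq-prod-lemma-2} with
\[
P(k)=\sum_{j=0}^n q_j\,k^{\underline{j}}\,(k-3/2-j)^{\underline{n-j}}.
\]
Each summand is a product of falling factorials of degrees $j$ and $n-j$, hence a polynomial in $k$ of degree exactly $n$ whose coefficient of $k^n$ is $q_j$; summing, $P$ has degree at most $n$ with leading coefficient $\sum_{j=0}^n q_j=Q(1)$. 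Since $Q(1)\ne 0$, this coefficient does not vanish, so $\deg P=n$, as claimed.

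The argument is largely bookkeeping with falling factorials, and I expect the only genuinely delicate step to be the treatment of the indices $k<n$, where the naive telescoping identity breaks down but the vanishing of $k^{\underline{j}}$ exactly compensates for the missing convolution terms. I would also verify that no denominator factor $k-3/2-i$ can vanish — it cannot, being a half-integer — so that \eqref{eq-prod-lemma-2} is a true numerical identity at each nonnegative integer $k$ rather than a merely formal one.
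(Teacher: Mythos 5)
Your proposal is correct and follows essentially the same route as the paper: expand $\gamma_k$ as a convolution, telescope the ratio recurrence \eqref{eq-helper-recurrence} to get $c_{k-s}/c_k = k^{\underline{s}}/(k-3/2)^{\underline{s}}$ (with the $k<s$ case handled by the vanishing of $k^{\underline{s}}$), and clear denominators via the splitting of the falling factorial to obtain $P(x)=\sum_{s} q_s\,x^{\underline{s}}(x-s-3/2)^{\underline{n-s}}$ with leading coefficient $Q(1)$. Your added checks (that $c_k\neq 0$ and that no half-integer denominator factor vanishes, and that $Q(1)\neq 0$ is what makes $\deg P$ exactly $n$) are points the paper leaves implicit, but the argument is the same.
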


\begin{proof} Writing $Q(z)=\sum_{s=0}^n q_s z^s$, we find that 
$\gamma_k = \sum_{s=0}^n q_s c_{k-s}$ with the convention that $c_{k-s}=0$ when $k<s$. The relation~\eqref{eq-helper-recurrence} implies 
\[
\frac{c_{k-s}}{c_k} = \frac{k^{\underline{s}}}{(k-3/2)^{\underline{s}}}
\]
which remains true even if $k<s$ when both sides vanish. Hence
\[
\frac{\gamma_k}{c_k} = \sum_{s=0}^n  \frac{q_s k^{\underline{s}}}{(k-3/2)^{\underline{s}}} = 
 \sum_{s=0}^n \frac{ q_s  k^{\underline{s}}(k-s-3/2)^{\underline{n-s}} }{(k-3/2)^{\underline{n}}}
\]
which matches~\eqref{eq-prod-lemma-2} with 
\begin{equation}\label{eq-prod-poly-formula}
P(x) =  \sum_{s=0}^n q_s  x^{\underline{s}}(x-s-3/2)^{\underline{n-s}}.   
\end{equation}
The coefficient of $x^n$ in~\eqref{eq-prod-poly-formula} is $\sum_{s=0}^n q_s = Q(1)$. 
\end{proof}

We are now ready to implement the idea described at the beginning of this section.

\begin{proposition}\label{prop-all-negative} Let $T_n$ be as in~\eqref{eq-taylor-poly}. The series
\begin{equation}\label{eq-prod-1}
T_n(z) \sqrt{1-z} = \sum_{k=0}^\infty \gamma_k z^k  
\end{equation}
satisfies $\gamma_k\le 0$ for $k>0$, and $\gamma_k < 0$ for $k>n$.     
\end{proposition}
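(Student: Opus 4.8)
The plan is to invoke Lemma~\ref{lemma-prod-poly} with $Q=T_n$, which is legitimate since $T_n(1)>\sqrt{3}/3>0$ by Lemma~\ref{lemma-taylor-at1}. This gives $\gamma_k/c_k = P(k)/(k-3/2)^{\underline{n}}$ with $\deg P=n$ and leading coefficient $T_n(1)>0$, so the sign of each $\gamma_k$ is controlled by the sign of $P(k)$ together with those of $c_k$ and $(k-3/2)^{\underline{n}}$. The engine of the argument is the identity $\sqrt{1+z+z^2}\,\sqrt{1-z}=\sqrt{1-z^3}$. Because $T_n$ agrees with $\sqrt{1+z+z^2}$ through order $n$, the product $T_n(z)\sqrt{1-z}$ agrees through order $n$ with $\sqrt{1-z^3}=\sum_m c_m z^{3m}$. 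Hence for $0<k\le n$ one reads off $\gamma_k=c_{k/3}\le 0$ when $3\mid k$ (negative once $k\ge 3$) and $\gamma_k=0$ otherwise. This already settles the range $0<k\le n$, and, crucially, it shows that $P$ vanishes at every non-multiple of $3$ in $\{0,1,\dots,n\}$, accounting for $n-\lfloor n/3\rfloor$ of its $n$ roots.

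For the decisive range $k>n$ I would first observe that $(k-3/2)^{\underline{n}}>0$ (each of its factors is at least $1/2$ once $k\ge n+1$) while $c_k<0$, so that $\gamma_k<0$ is \emph{equivalent} to $P(k)>0$. Only $\lfloor n/3\rfloor$ roots of $P$ remain unlocated, and I would pin them down from the explicit values $P(3i)=(3i-3/2)^{\underline{n}}\,c_i/c_{3i}$ at the multiples of $3$ in $[0,n]$; since $c_i/c_{3i}>0$, the sign of $P(3i)$ is just that of the falling factorial, and these signs alternate for consecutive multiples of $3$. By the intermediate value theorem this forces one extra root into each interval $(3i,3i+3)$ for $1\le i\le \lfloor n/3\rfloor-1$, and a parity bookkeeping of the remaining sign changes confines all but at most one of the leftover roots to $(0,n)$. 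The upshot I am aiming for is that $P$ has \emph{at most one} root in $(n,\infty)$.

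It then suffices to push that last possible root below $n+1$, i.e.\ to prove $P(n+1)>0$, equivalently $\gamma_{n+1}<0$: combined with $P(x)\to+\infty$ and the ``at most one root'' bound, this yields $P(k)>0$, hence $\gamma_k<0$, for every integer $k\ge n+1$. Extending the computation above via $T_n(z)\sqrt{1-z}=\sqrt{1-z^3}-\big(\sum_{j>n}b_jz^j\big)\sqrt{1-z}$ gives $\gamma_{n+1}=[z^{n+1}]\sqrt{1-z^3}-b_{n+1}$. When $3\nmid(n+1)$ the first term vanishes and $b_{n+1}>0$ by Lemma~\ref{lemma-signs-coeffs}, so $\gamma_{n+1}=-b_{n+1}<0$ immediately.

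I expect the complementary case $3\mid(n+1)$ to be the main obstacle. Writing $m=(n+1)/3$, one has $\gamma_{n+1}=c_m-b_{3m}$ with both terms negative, so $\gamma_{n+1}<0$ is exactly the sharp comparison $b_{3m}>c_m$. A natural route is to expand $\sqrt{1+z+z^2}=\sqrt{1-z^3}\,(1-z)^{-1/2}$ with $(1-z)^{-1/2}=\sum_k e_k z^k$, $e_k=\binom{2k}{k}4^{-k}>0$, which yields $b_{3m}-c_m=\sum_{l=0}^{m-1}c_l\,e_{3(m-l)}=e_{3m}-\sum_{l=1}^{m-1}|c_l|\,e_{3(m-l)}$; the content of the obstacle is to show that the single positive term $e_{3m}$ dominates the remaining sum despite $e_k$ being decreasing in $k$. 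Besides this inequality, the step that needs the most care for full rigor is the root count of the previous paragraph, since the precise parity argument (and hence the ``at most one root in $(n,\infty)$'' conclusion) must be checked separately according to $n \bmod 3$.
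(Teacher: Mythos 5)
Your framework coincides with the paper's own proof almost step for step: Lemma~\ref{lemma-prod-poly} applied to $Q=T_n$ (legitimate by Lemma~\ref{lemma-taylor-at1}), the identification $\gamma_k=c_{k/3}$ or $0$ for $0<k\le n$, the equivalence of $\gamma_k<0$ with $P(k)>0$ for $k\ge n+1$, and the count of $n-1$ roots of $P$ in $[1,n]$ (the $n-\lfloor n/3\rfloor$ integer roots at non-multiples of $3$, plus one extra root in each interval $(3i,3i+3)$ forced by the alternating signs at multiples of $3$), leaving at most one root in $(n,\infty)$. Your treatment of the case $3\nmid(n+1)$ is correct and complete: there $\gamma_{n+1}=-b_{n+1}<0$ by Lemma~\ref{lemma-signs-coeffs}, hence $P(n+1)>0$, and together with the root count and $P(x)\to+\infty$ this gives $P>0$ on $[n+1,\infty)$. (The paper runs this very computation in its case $3\mid n$, and handles $n\equiv 1\pmod 3$ by a different device.)

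The case $3\mid(n+1)$, however, is a genuine gap, as you yourself flag. You reduce it to $b_{3m}>c_m$ with $m=(n+1)/3$, equivalently $e_{3m}>\sum_{l=1}^{m-1}|c_l|\,e_{3(m-l)}$, and this inequality, while true, is not accessible to the domination argument you sketch: since $\sum_{l\ge 1}|c_l|=1$ and $e_{3(m-l)}\approx e_{3m}$ for $l=o(m)$, both sides are asymptotic to $(3\pi m)^{-1/2}$ and agree to first order, while their difference is of the much smaller order $m^{-3/2}$ (the order of $b_{3m}$ and $c_m$ themselves); establishing its positivity is essentially the same problem as the statement $\gamma_{n+1}<0$ you need it for, so no progress has been made. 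The paper sidesteps this entirely with one idea missing from your proposal: evaluate $P$ at the half-integer $x=n+1/2$. In the explicit formula~\eqref{eq-prod-poly-formula}, every term with $s<n$ contains the falling factorial $(n-s-1)^{\underline{n-s}}$, whose last factor is $0$, so the sum collapses to $P(n+1/2)=b_n\,(n+1/2)^{\underline{n}}$. When $3\mid(n+1)$ we have $3\nmid n$, so $b_n>0$ and $P(n+1/2)>0$; on the other hand $3\mid(n-2)$ and $P(n-2)<0$, since by~\eqref{eq-quotient-coeffs} its sign is that of $(n-7/2)^{\underline{n}}$, which has exactly three negative factors. Hence $(n-2,\,n+1/2)$ contains an odd number of roots of $P$, so at least three (two of them being $n-1$ and $n$), and the single unlocated root is thereby pinned below $n+1/2<n+1$. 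With this evaluation the delicate coefficient inequality never has to be confronted.
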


\begin{proof} Since $T_n(z) = \sqrt{1+z+z^2} + o(z^n)$ as $z\to 0$, it follows that 
$T_n(z) \sqrt{1-z} = \sqrt{1-z^3} + o(z^n)$. Thus, for $k=1,\dots, n$ we have 
\begin{equation}\label{eq-small-coeffs}
\gamma_k = \begin{cases}
c_{k/3} & \text{if $3\mid k$} \\ 
0 & \text{otherwise}
\end{cases}
\end{equation}
where $c_{k/3}<0$ due to~\eqref{eq-helper-explicit}. 

Lemma~\ref{lemma-prod-poly} yields  
\begin{equation}\label{eq-quotient-coeffs}
\gamma_k = \frac{P(k) c_k}{(k-3/2)^{\underline{n}}},\quad k=0,1,\dots
\end{equation}
where $P$ is a polynomial of degree $n$ with leading coefficient $T_n(1)>0$ (recall Lemma~\ref{lemma-taylor-at1}). For $k\ge n+1$ we have $(k-3/2)^{\underline{n}}>0$ and $c_k<0$. Thus, it remains to show that $P(k)>0$ for $k\ge n+1$. Since the leading coefficient of $P$ is positive, it suffices to prove the following claim. 

\textbf{Claim.} The polynomial $P$ has $n$ real roots, all of which lie in the interval $[1, n+1)$. Here and below the roots are counted with multiplicity.

From~\eqref{eq-small-coeffs} and~\eqref{eq-quotient-coeffs} we see that $P$ vanishes at those integers $1,\dots, n$ that are not divisible by $3$. This proves the above claim for $n\le 2$. From now on $n>2$. 

If $1\le k\le n$ and $3\mid k$, the formulas~\eqref{eq-small-coeffs} and~\eqref{eq-quotient-coeffs} show that $P(k)$ has the same sign as $(k-3/2)^{\underline{n}}$. Since this falling factorial has $n-k+1$ negative factors, we have
\begin{equation}\label{eq-sign-k3}
(-1)^{n-k+1} P(k) > 0 \quad \text{if } 1\le k\le n \text{ and } 3\mid k.   
\end{equation}
It follows that $P(k)$ and $P(k+3)$  have opposite signs when $1\le k < k+3\le n$ and $3\mid k$. Therefore, the number of roots of $P$ on the interval $(k, k+3)$ is odd. Since $P(k+1)=0=P(k+2)$, the interval $(k, k+3)$ has at least three roots of $P$. 

So far we found that $P$ vanishes at $n - \lfloor n/3\rfloor$ integers and has $\lfloor n/3\rfloor - 1$ additional roots which lie in the intervals $(3, 6)$, $(6, 9)$, etc. Thus, the interval $[1, n]$ contains at least $n-1$ roots of $P$. To locate the remaining root, we consider three cases.

\textit{Case 1:} $n+1$ is divisible by $3$. From~\eqref{eq-prod-poly-formula} we find
\[
P(n+1/2) = \sum_{s=0}^n b_s (n+1/2)^{\underline{s}}(n-s-1)^{\underline{n-s}}  
=  b_n (n+1/2)^{\underline{n}} > 0.
\]
where the last step uses Lemma~\ref{lemma-signs-coeffs}. Since $P(n-2)<0$ by~\eqref{eq-sign-k3}, the interval $(n-2, n+1/2)$ contains an odd number of roots of $P$. These include $n-1$ and $n$, so there must be at least three roots. Thus, in Case~1 all roots of $P$ lie in $[1, n+1/2)$. 

\textit{Case 2:} $n+2$ is divisible by $3$. As in Case 1, we have $P(n+1/2)>0$. Since $P(n-1)>0$ by~\eqref{eq-sign-k3}, the interval $(n-1, n+1/2)$ contains an even number of roots of $P$. Since $P(n) = 0$, there is another root in this interval. We conclude that all roots of $P$ lie in $[1, n+1/2)$. 

\textit{Case 3:} $n$ is divisible by $3$. Then $P(n)<0$ by~\eqref{eq-sign-k3}. We claim that $P(n+1)>0$.
Indeed, from~\eqref{eq-quotient-coeffs} we have
\[
P(n+1) = \frac{\gamma_{n+1}}{c_{n+1}}(n-1/2)^{\underline{n}}
\]
where $c_{n+1}<0$ and $(n-1/2)^{\underline{n}} > 0$. It remains to show that $\gamma_{n+1}<0$. By adding $b_{n+1}z^{n+1}\sqrt{1-z}$ to both sides of~\eqref{eq-prod-1}, we obtain 
\begin{equation}\label{eq-prop-np1}
T_{n+1}(z)\sqrt{1-z} = b_{n+1}z^{n+1}\sqrt{1-z}
+ \sum_{k=0}^\infty \gamma_k z^k. 
\end{equation}
The left hand side of~\eqref{eq-prop-np1} is
$\sqrt{1-z^3} + o(z^{n+1})$ as $z\to 0$, which implies that the coefficient of $z^{n+1}$ in~\eqref{eq-prop-np1} is $0$. Hence
$\gamma_{n+1} = - b_{n+1} < 0$, using Lemma~\ref{lemma-signs-coeffs}. This proves $P(n+1)>0$. The change of sign of $P$ on the interval $(n, n+1)$ shows that all zeros of $P$ lie  in $[1, n+1)$. The proof of Proposition~\ref{prop-all-negative} is complete.    
\end{proof}

\begin{proof}[Proof of Theorem~\ref{thm-no-zeros}] The series~\eqref{eq-prod-1} converges absolutely for $|z|\le 1$ since the binomial series for $\sqrt{1-z}$ does. Letting $z=1$ in~\eqref{eq-prod-1} we find that 
$\sum_{k=0}^\infty \gamma_k = 0$. Proposition~\ref{prop-all-negative} implies  that  
$\sum_{k=1}^\infty |\gamma_k| = \gamma_0 = 1$. By the triangle inequality, the series~\eqref{eq-prod-1} does not vanish in $\overline{\mathbb D}\setminus \{1\}$. Since $T_n(1)>0$ by Lemma~\ref{lemma-taylor-at1}, the polynomial $T_n$ has no zeros in $\overline{\mathbb D}$.     
\end{proof}

\begin{proof}[Proof of Theorem~\ref{thm-three-terms}] As was described in \S\ref{sec-introduction}, we apply 
Theorem~\ref{thm-szasz} with $\mu_k=1$ for $k=0,1,2$ and $\mu_k=0$ for $k=3,\dots,n$. The polynomial $P_\lambda$ from~\eqref{eq-szasz-assumption} is precisely $T_n$ from~\eqref{eq-taylor-poly}, so $\lambda_k=b_k$ for $k=0,\dots, n$.  Thanks to Theorem~\ref{thm-no-zeros} we know that equality is attained in $\|S_{n-2,n}\|_\infty \le \sum_{k=0}^n b_k^2$ when $f$ is a Blaschke product with denominator $T_n$. 

By Parseval's theorem,   
$\sum_{k=0}^\infty b_k^2$ is the squared $H^2$ norm of $\sqrt{1+z+z^2}$, which is 
\[
\frac{1}{2\pi} \int_0^{2\pi} |1+e^{it}+e^{2it}|\,dt
= \frac{1}{2\pi} \int_0^{2\pi} |1+2\cos t|\,dt  
=\frac{1}{3} + \frac{2\sqrt{3}}{\pi}.
\]
The proof is complete. 
\end{proof}

\section*{Acknowledgment}

The author thanks Xuerui Yang for the discussions that led to Conjecture~\ref{q-zeros}.  

\bibliography{taylor-ref.bib} 
\bibliographystyle{plain} 
\end{document}